\newtheorem{theorem}{Theorem}[section]
\newtheorem{theorem A}{Theorem A}
\newtheorem{theorem B}{Theorem B}
\newtheorem{lemma}[theorem]{Lemma}
\newtheorem{remark}[theorem]{Remark}
\newtheorem{proposition}[theorem]{Proposition}
\theoremstyle{definition}
\newtheorem{definition}[theorem]{Definition}
\begin{document}
\address{Department of Mathematics, Ferdowsi University of Mashhad, Mashhad, Iran}
\title{$\tilde{\mathcal{B}}_0$-invariant of groups}
\author[Z. Araghi Rostami]{Zeinab Araghi Rostami}
\author[M. Parvizi]{Mohsen Parvizi}
\author[P. Niroomand]{Peyman Niroomand}
\address{Department of Pure Mathematics\\
Ferdowsi University of Mashhad, Mashhad, Iran}
\email{araghirostami@gmail.com, zeinabaraghirostami@alumni.um.ac.ir}
\address{Department of Pure Mathematics\\
Ferdowsi University of Mashhad, Mashhad, Iran}
\email{parvizi@um.ac.ir}
\address{Department of Mathematics, Damghan University, Damghan, Iran}
\email{niroomand@du.ac.ir}
\keywords{Baer invariant, Bogomolov multiplier, $\tilde{\mathcal{B}}_0$-invariant, Outer commutator variety, Schur multiplier}
\maketitle
\begin{abstract}
The Bogomolov multiplier of a group \( G \) was introduced by Bogomolov in \( 1988 \). Afterward, in \( 2012 \), Moravec introduced an equivalent definition of the Bogomolov multiplier. In this paper, we generalize the Bogomolov multiplier with respect to a variety of groups. Subsequently, we present some new results on this topic.
\end{abstract}
\section{\bf{Introduction}}

For a group \( G \), by a result of Hopf in \cite{7}, if \( G \cong \frac{F}{R} \) is a free presentation of \( G \), then the groups \( \frac{F'}{[R,F]} \) and \( \frac{F'\cap R}{[R,F]} \) are independent of the choice of \( F \), i.e., are invariants of \( G \). Moreover, he proved that \( \frac{F'\cap R}{[R,F]} \) is isomorphic to the Schur multiplier \( H^2(G,\mathbb{Q}/\mathbb{Z}) \) of \( G \). Baer in \cite{2} generalized these concepts to an arbitrary variety of groups. Later, Fröhlich in \cite{4} generalized the idea of Baer for any two varieties \( \mathcal{V} \) and \( \mathcal{W} \) when \( \mathcal{W} \subseteq \mathcal{V} \), and in the third article \cite{9'}, the Baer invariant of \( G \) was reviewed with respect to two varieties \( \mathcal{V} \) and \( \mathcal{W} \) in general.

Bogomolov in \cite{3} proved that the unramified cohomology group \( H_{nr}^2(\mathbb{C}(V)^G,\mathbb{Q}/\mathbb{Z}) \) is canonically isomorphic to
\[ \mathcal{B}_0(G) = \bigcap_{A\leq G} \text{ker} \, \text{res}_A^G, \]
where \( A \) is an abelian subgroup of \( G \), \( V \) is a faithful representation of \( G \) over \( \mathbb{C} \), \( \text{res}_A^G: H^2(G,\mathbb{Q}/\mathbb{Z}) \longrightarrow H^2(A,\mathbb{Q}/\mathbb{Z}) \) is the usual cohomological restriction map, and \( \mathbb{C}(V)^G \) is the field of \( G \)-invariant functions. \( \mathcal{B}_0(G) \) is defined as the subgroup of the so-called Schur multiplier \( \mathcal{M}(G) = H^2(G,\mathbb{Q}/\mathbb{Z}) \) of \( G \). Kunyavskii in \cite{8} named \( \mathcal{B}_0(G) \) the \emph{Bogomolov multiplier} of \( G \). Moravec in \cite{10} showed that in the class of finite groups, the group \( \tilde{\mathcal{B}}_0(G) = \mathcal{M}(G)/\mathcal{M}_0(G) \) is non-canonically isomorphic to \( \mathcal{B}_0(G) \), where the Schur multiplier \( \mathcal{M}(G) \) is isomorphic to \( \text{ker} (G\wedge G \rightarrow [G,G]) \) given by \( x\wedge y \mapsto  [x,y] \) and \( \mathcal{M}_0(G) \) is a subgroup of \( \mathcal{M}(G) \) defined as \( \mathcal{M}_0(G) = \langle x\wedge y \ | \  [x,y]=0 , \  x,y\in G \rangle \). Moravec in \cite{10} showed that if \( \frac{F}{R} \) is a free presentation of the group \( G \), then \( \tilde{\mathcal{B}}_0(G) \cong \frac{F'\cap R}{\langle K(F)\cap R \rangle} \) and it is an invariant of \( G \), where \( K(F) =\{ [x,y] \ | \ x,y\in F\} \).

In this paper, we are going to generalize the concept of the Bogomolov multiplier with respect to an arbitrary variety of groups. Here, we want to introduce a large class of invariants of \( \frac{F'\cap R}{\langle K(F)\cap R \rangle} \), derived from free presentations.

\section{\bf{Notations and preliminaries}}

In the following, we provide some preliminaries required for subsequent discussions. 

Let \( x \) and \( y \) be two elements of a group \( G \). Then \( [x, y] \), the commutator of \( x \) and \( y \), and \( x^y \) denote the elements \( x^{-1}y^{-1}xy \) and \( y^{-1}xy \), respectively.

The left-normed commutators of higher weights are defined inductively as
\[ [x_1 , x_2 , \ldots , x_n] = [[x_1, \ldots ,x_{n-1}] , x_n]. \]

If \( M \) and \( N \) are two subgroups of a group \( G \), then \( [M,N] \) denotes the subgroup of \( G \) generated by all the commutators \( [m,n] \) with \( m \in M \) and \( n \in N \). In particular, if \( M = N = G \), then \( [G,G] \), denoted by \( G' \), is the derived subgroup of \( G \). The lower and upper central series are denoted by \( \gamma_{c}(G) \), where \( c \geq 1 \), and \( Z_{d}(G) \) where \( d \geq 0 \), respectively.

Let \( F_{\infty} \) be the free group freely generated by an infinite countable set \( \{x_1, x_2, \ldots \} \). If \( u = u(x_1, \ldots ,x_s) \) and \( v = v(x_1, \ldots ,x_t) \) are two words in \( F_{\infty} \), then the composite \( u \circ v \) of \( u \) and \( v \) is defined as 
\[ u \circ v = u(v(x_1, \ldots ,x_t), \ldots ,v(x_{(s-1)t+1}, \ldots ,x_{st})). \]

In particular, the composite of some nilpotent words is called a polynilpotent word, i.e.,
\[ \gamma_{c_1+1, \ldots ,c_t+1} = \gamma_{c_1+1} \circ \gamma_{c_2+1} \circ \ldots \circ \gamma_{c_t+1}, \]
where \( \gamma_{c_i+1} (1 \leq i \leq t) \) is a nilpotent word in distinct variables. Outer commutator words are defined inductively as follows. The word \( x_i \) is an outer commutator word (or o.c word) of weight one. If \( u = u(x_1, \ldots ,x_s) \) and \( v = v(x_{s+1}, \ldots ,x_{s+t}) \) are o.c words of weights \( s \) and \( t \) respectively, then 
$$w(x_1, \ldots ,x_{s+t}) = [u(x_1, \ldots ,x_s),v(x_{s+1}, \ldots ,x_{s+t})]$$
is an o.c word of weight \( s+t \). 

Let \( V \) be a subset of \( F_{\infty} \) and \( \mathcal{V} \) be the variety of groups defined by the set of laws \( V \). P. Hall in \cite{5} introduced the notion of the verbal subgroup \( V(G) \) and the marginal subgroup \( V^{*}(G) \), associated with the variety \( \mathcal{V} \) and the group \( G \), where 
\[ V(G) = \langle v(g_1,g_2,\ldots ,g_r) \ | \ v \in V ,  g_1,g_2,\ldots,g_r \in G \rangle \]
and

\begin{align*}
&V^{*}(G)=\\
&\{a\in G \ | \ v(g_1,g_2,\ldots ,g_ia, \ldots , g_r) =v(g_1,g_2,\ldots ,g_r) , v\in V ,  g_1,g_2,\ldots,g_r\in G \}.
\end{align*}

(see \cite{9,11}, for more information).

\begin{proposition} [\cite{9}, Proposition 1.3] \label{p2.1}
If \( N \unlhd G \) and \( \mathcal{V} , \mathcal{W} \) are varieties such that \( \mathcal{V} \subseteq [\mathcal{W},\mathcal{F}] \), where \( \mathcal{F} \) is the variety whose groups are all trivial, then
\renewcommand {\labelenumi}{(\roman{enumi})}
\begin{enumerate}
\item \( N \cap V(G) \supseteq [NV^*G] \supseteq [N,W(G)] \),
\item \( [V^*(G),W(G)] = 1 \),
\end{enumerate}
where \\
\( [NV^{*}G] = \langle v(g_1, \ldots ,g_i{n}, \ldots ,g_s)v(g_1, \ldots ,g_s)^{-1} \ | \ v \in V, g_i \in G, n \in N, 1 \leq i \leq s \rangle. \)

We remark that if \( \mathcal{V} \) is the variety of groups defined by o.c words \( V \), then for any group \( G \)
\begin{align*}
V^{*}(G) &= \{a \in G \ | \  v(g_1,\ldots , g_{i-1},a,g_{i+1},\ldots , g_s) = 1 ,  v \in V , g_1, \ldots ,g_s \in G , 1 \leq i \leq s \}.
\end{align*}
Hence, if \( N \) is a normal subgroup of \( G \) then
\[ [NV^{*}G] = \langle v(g_1,\ldots , g_{i-1},b,g_{i+1},\ldots , g_s)\ | \  v \in V , g_1, \ldots ,g_s \in G , 1 \leq i \leq s , b \in N \rangle. \]

See Lemma \( 2.3 \) in \cite{9''}.
\end{proposition}

\begin{lemma} [\cite{7''}, Lemma 2.9] \label{l2.2}
Let \( G \) be a group and \( u \), \( w \) be two words, \( v = [u,w] \) and \( K \) is a normal subgroup of a group \( G \), then
\[ [Kv^{*}G] = [[Ku^{*}G],w(G)][u(G), [Kw^{*}G]]. \]
\end{lemma}

\begin{proposition} \label{p2.3}
Let \( N \unlhd G \), and \( V, W \) be sets of o.c words and \( \mathcal{V}, \mathcal{W} \) are the corresponding varieties such that \( \mathcal{V} \subseteq [\mathcal{W},\mathcal{F}] \), where \( \mathcal{F} \) be the variety of the trivial group. Then
\[ [N,W(G)] \subseteq [NV^*G] \subseteq \langle T(G)\cap N \rangle \subseteq N\cap V(G), \]
where \( T(G) = \{v(g_1, \ldots ,g_s) \ | \ v \in V, g_i \in G ,1 \leq i \leq s\} \).
\end{proposition}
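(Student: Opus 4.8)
The plan is to establish the chain of four inclusions from left to right, reducing each to a statement about generators. For the outer inclusion $N\cap V(G)\supseteq \langle T(G)\cap N\rangle$ there is nothing to prove: every element of $T(G)$ is a value $v(g_1,\dots,g_s)$ of a word $v\in V$, hence lies in the verbal subgroup $V(G)$, and any such value that also lies in $N$ lies in $N\cap V(G)$; since $N\cap V(G)$ is a subgroup, it contains the subgroup generated by $T(G)\cap N$. The leftmost inclusion $[N,W(G)]\subseteq[NV^{*}G]$ is exactly part (i) of Proposition \ref{p2.1}, since $V$ and $W$ are o.c.\ words and $\mathcal{V}\subseteq[\mathcal{W},\mathcal{F}]$ by hypothesis, so the conditions of that proposition are met.

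The real content is the middle inclusion $[NV^{*}G]\subseteq\langle T(G)\cap N\rangle$. First I would reduce to the case where $V=\{v\}$ is a single o.c.\ word: $[NV^{*}G]$ is generated by the subgroups $[Nv^{*}G]$ as $v$ ranges over $V$, and $T(G)$ is the union of the corresponding value-sets, so it suffices to show $[Nv^{*}G]\subseteq\langle T_v(G)\cap N\rangle$ for each single o.c.\ word $v$, where $T_v(G)$ is its set of values. Then I would induct on the weight of $v$. If $v$ has weight one, $v=x_1$, then $V(G)=G$, $V^{*}(G)=1$, and $[Nv^{*}G]=[N,1]=1$, which is contained in anything; alternatively one checks directly against the definition of $[NV^{*}G]$. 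For the inductive step write $v=[u,w]$ with $u,w$ o.c.\ words of smaller weight. By Lemma \ref{l2.2} (Hulse--Lennox),
$$[Nv^{*}G]=[[Nu^{*}G],w(G)]\,[u(G),[Nw^{*}G]].$$
Now I would apply the inductive hypothesis to $u$ and to $w$: $[Nu^{*}G]\subseteq\langle T_u(G)\cap N\rangle$ and $[Nw^{*}G]\subseteq\langle T_w(G)\cap N\rangle$. The point is then that a commutator $[t,g]$ with $t$ a value of $u$ lying in $N$ and $g\in G$ — or more precisely $[t',g']$ with $t'=w(\bar g)$ a $w$-value and the other entry a $u$-value in $N$ — is itself a value of $v=[u,w]$ (or of $v=[w,u]$, which is an o.c.\ word of the same variety up to the obvious symmetry), and since one of the two entries lies in $N$ and $N\unlhd G$, the whole commutator lies in $N$; hence it lies in $T_v(G)\cap N$. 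A short commutator-calculus argument (expanding $[ab,c]$ and $[a,bc]$ and using normality of $N$) shows that the subgroup generated by commutators of elements of $\langle T_u(G)\cap N\rangle$ with elements of $w(G)$ is contained in $\langle T_v(G)\cap N\rangle$, and symmetrically for the second factor; combining the two gives $[Nv^{*}G]\subseteq\langle T_v(G)\cap N\rangle$.

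The main obstacle I anticipate is the bookkeeping in that last commutator-calculus step: one must show that expanding a commutator $[t_1\cdots t_k,\,s]$, where each $t_i$ is a $u$-value lying in $N$ and $s\in w(G)$, re-expresses it as a product of conjugates of commutators $[t_i^{\pm1},s']$ with $s'$ still in $w(G)$, each of which is a $v$-value in $N$; the same must be done for multi-letter entries on the $w$ side, and one must keep track that conjugation by elements of $G$ preserves both membership in $N$ (normality) and the property of being a $v$-value (since conjugating an o.c.\ value gives an o.c.\ value — a value of the same word with conjugated arguments, using that $g^{-1}[a,b]g=[a^g,b^g]$). None of this is deep, but it is the step where the hypothesis that $V$ and $W$ consist of \emph{outer commutator} words (so that the relevant words are closed under the bracketing that appears) is actually used, and it is where care is needed to avoid circularity with the induction.
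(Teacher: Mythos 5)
Your proof follows essentially the same route as the paper's one-line argument: Proposition \ref{p2.1} for the two outer inclusions, the trivial observation for $\langle T(G)\cap N\rangle\subseteq N\cap V(G)$, and induction on the weight of the o.c.\ word via the Hulse--Lennox Lemma \ref{l2.2} together with the commutator-calculus reduction (conjugates of o.c.\ values are o.c.\ values, and normality of $N$ keeps them in $N$) for the middle inclusion, which is exactly what the paper's citation of those two results is gesturing at. One small correction to your base case: for $v=x_1$ the set $[Nv^{*}G]$ is not $[N,V^{*}(G)]=1$ but, by the displayed definition of $[NV^{*}G]$, the subgroup generated by all $g_1ng_1^{-1}$, i.e.\ $N$ itself; the needed inclusion still holds since $\langle T_{x_1}(G)\cap N\rangle=\langle G\cap N\rangle=N$ as well, so the induction is unaffected.
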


\begin{proof}
If \( v = [u,w] \), then 
\[V(G) = [U(G),W(G)]  \ \ , \ \  T(G) = \{[u(g_1,\ldots ,g_t),w(g_{t+1},\ldots ,g_{t+s})]\ | \ g_1,\ldots , g_{t+s}\in G\} .\]
 Hence, by using Proposition \ref{p2.1} and Lemma \ref{l2.2}, \( [NV^*G] \) is generated by the following set
\begin{align*}
&\{[u(g_1,\ldots ,g_{i-1},b,g_{i+1},\ldots ,g_t),w(g_{t+1},\ldots ,g_{t+s})]\ | \ \\
&g_1,\ldots ,g_{t+s}\in G, 1\leq i\leq t, b\in N\}\\
&\cup \{[u(g_1,\ldots ,g_t),w(g_{t+1},\ldots ,g_{t+j-1} ,b,g_{t+j+1},\ldots ,g_{t+s})]\ | \ \\
&g_1,\ldots ,g_{t+s}\in G, 1\leq j\leq s, b\in N\}.
\end{align*}
Thus, \( [N,W(G)] \subseteq [NV^*G] \subseteq \langle T(G)\cap N \rangle \subseteq N\cap V(G) \), and the result is obtained.
\end{proof}

Let \( G \) be a group with a free presentation \( G\cong {F}/{R} \), then the Baer invariant of \( G \) with respect to the variety \( \mathcal{V} \), is defined to be
\[ \mathcal{V}\mathcal{M}(G) = \frac{R\cap V(F)}{[RV^{*}F]}. \]

Leedham-Green and Mckay in \cite{9} showed that the Baer invariant of the group \( G \) is always abelian and independent of the choice of a free presentation of \( G \). In particular, if \( \mathcal{V} \) is the variety of abelian or nilpotent groups of class at most \( c \) \((c \geq 2)\), then the Baer invariant of \( G \) is isomorphic to the Schur multiplier of \( G \) or \( \frac{R \cap {\gamma}_{c+1}(F)}{[R,cF]} \) where \( ([R,cF]=[R,\underbrace{F,...,F}_{c-\text{copies}}]) \), respectively.

\section{$\tilde{\mathcal{B}}_0$-invariant}

In this section, we introduce another invariant of $G$ closely related to the Baer invariant.

\begin{definition}\label{d3.1}
Let $G$ be any group with a free presentation $G\cong {F}/{R}$ and $V$ be a set of o.c words. Then the $\tilde{\mathcal{B}}_0$-invariant $\mathcal{V}\tilde{\mathcal{B}}_0(G)$ of $G$ with respect to the variety $\mathcal{V}$ is defined by
$$\mathcal{V}\tilde{\mathcal{B}}_0(G)=\frac{R\cap V(F)}{\langle T(F)\cap R \rangle},$$
where $T(F)=\{v(f_1, \ldots ,f_s) \ | \ v\in V, f_i\in F, 1\leq i\leq s\}$.
\end{definition}

Since $[V(G),G]\subseteq V(G)$, therefore $\mathcal{V}\subseteq [\mathcal{V}, \mathcal{F}]$. Thus by using Proposition \ref{p2.3}, we have
$$[R\cap V(F), R\cap V(F)]\subseteq [R,V(F)]\subseteq [RV^*F]\subseteq  \langle T(F)\cap R \rangle.$$
Hence the group $\mathcal{V}\tilde{\mathcal{B}}_0(G)$ is abelian. Also we can write   
$$\mathcal{V}\tilde{\mathcal{B}}_0(G)=\frac{R\cap V(F)}{\langle T(F)\cap R \rangle }=\frac{(R\cap V(F))/[RV^{*}F]}{\langle T(F)\cap R \rangle /[RV^{*}F]}.$$
We know $\dfrac{(R\cap V(F))}{[RV^{*}F]}$ is the Baer invariant. Now, if we denote $\dfrac{\langle T(F)\cap R \rangle }{[RV^{*}F]}$ by $\mathcal{V}{\mathcal{M}}_{0}(G)$, then we have
$$\mathcal{V}\tilde{\mathcal{B}}_0(G)=\frac{\mathcal{V}\mathcal{M}(G)}{\mathcal{V}{\mathcal{M}}_{0}(G)}.$$
The next lemma shows this notion is independent of the chosen free presentation of $G$.

\begin{lemma}\label{l3.2}
Let $\mathcal{V}$ be a variety of groups, $G, \bar{G}$ be two groups with free presentations $1 \xrightarrow{} R \xrightarrow{}  F \xrightarrow{\pi} G \xrightarrow{} 1$, $1\xrightarrow{} \bar{R}\xrightarrow{} \bar{F} \xrightarrow{\bar{\pi}}  \bar{G}\xrightarrow{} 1$, respectively, and $\alpha : G\rightarrow \bar{G}$ be a homomorphism. Then
\renewcommand {\labelenumi}{(\roman{enumi})} 
\begin{enumerate}
\item{$\text{there exists a homomorphism} \ \beta : F\rightarrow \bar{F} \   \text{such that}\   {\alpha}{\pi}={\bar{\pi}}{\beta}$,}
\item{$\text{there exists a homomorphism} \  {\alpha}^* : \mathcal{V}\tilde{\mathcal{B}}_0(G) \rightarrow \mathcal{V}\tilde{\mathcal{B}}_0(\bar{G})\   \text{which only depends on} \ \alpha$,}
\item{if $1\rightarrow \hat{R}\rightarrow \hat{F}\rightarrow \hat{G} \rightarrow 1$ is a free presentation for $\hat{G}$ and $\gamma : \bar{G}\rightarrow \hat{G}$ is a homomorphism, then $({\gamma}{\alpha})^*={\gamma}^*{\alpha}^*$,}
\item{$\mathcal{V}\tilde{\mathcal{B}}_0(G)$ is independent of the presentation of $G$.}
\end{enumerate}
\end{lemma}

\begin{proof}
$(i)$ Since $\alpha : G\rightarrow \bar{G}$ is a homomorphism, there exists a not necessarily unique homomorphism $\beta : F\rightarrow \bar{F}$ which is induced by $\alpha$ and ${\alpha}{\pi}={\bar{\pi}}{\beta}$.
\\
$(ii)$ The map $\beta$ induces a homomorphism ${\alpha}^* : \frac{R \cap V(F)}{\langle T(F)\cap R \rangle } \rightarrow \frac{\bar{R}\cap V(\bar{F})}{\langle T(\bar{F})\cap \bar{R} \rangle }$ \\
given by $x\langle T(F)\cap R \rangle \mapsto \beta(x) \langle T(\bar{F})\cap \bar{R} \rangle$, where $x\in R\cap V(F)$. We claim that ${\alpha}^*$ is well defined. Suppose that ${x_1} \langle T(F)\cap R \rangle={x_2} \langle T(F)\cap R \rangle$, where ${x_1},{x_2}\in R\cap V(F)$. We have ${x_1}{x_2}^{-1}\in \langle T(F)\cap R \rangle$ and so ${x_1}={x_2}t$, for some $t\in \langle T(F)\cap R \rangle$.
\begin{align*}
&{{\alpha}^*}({x_1}\langle T(F)\cap R \rangle )={\beta}(x_1)\langle T(\bar{F})\cap \bar{R} \rangle ={\beta}({x_2}t) \langle T(\bar{F})\cap \bar{R} \rangle \\
&={\beta}(x_2){\beta}(t) \langle T(\bar{F})\cap \bar{R} \rangle.
\end{align*}

It is enough to show that for $t\in \langle T(F)\cap R \rangle$, ${\beta}(t)\in \langle T(\bar{F})\cap \bar{R} \rangle$.\\
Since $t\in \langle R\cap T(F) \rangle$, there exist ${x_i},{y_i}\in F$ such that $t={\prod}_{i=1}^{n}[x_i,y_i]^{{\alpha}_i}$ and $[x_i,y_i]\in R\cap T(F)$. So ${\beta}(t)=\prod_{i=1}^{n}[{\beta}(x_i),{\beta}(y_i)]^{{\alpha}_i}$. Also, we have
$$\bar{\pi}({\beta}([x_i,y_i]))={\alpha}{\pi}([x_i,y_i])={\alpha}(1)=1 \ \ \  , \ \ \ {\beta}([x_i,y_i])=[{\beta}(x_i),{\beta}(y_i)]\in T(\bar{F}).$$
Therefore ${\beta}(t)\in \langle T(\bar{F})\cap \bar{R} \rangle$ and ${\alpha}^*$ is well defined.\\
Let ${\beta}'$ be another homomorphism induced by $\alpha$ such that ${\bar{\pi}}{\beta}={\alpha}{\pi}={\bar{\pi}}{\beta}'$. Then for any $f\in F$, $\beta(f)\equiv {\beta}'(f) \pmod{\bar{R}}$, and so if $v\in V$, then $v(\beta(f_1), \ldots ,\beta(f_s))\equiv v({\beta}'(f_1), \ldots ,{\beta}'(f_s)) \pmod{\langle T(\bar{F})\cap \bar{R} \rangle }$, for all $f_1, \ldots ,f_s \in F$. 
\\
Let $x\in R\cap V(F)$, then 
\begin{align*}
&x = v_1(f_1, f_2, \ldots, f_n)^{\alpha_1} \cdot v_2(f_1, f_2, \ldots, f_n)^{\alpha_2} \cdot \ldots \cdot v_n(f_1, f_2, \ldots, f_n)^{\alpha_n}\\
&=\prod_{i=1}^{n} {v_i}(f_1, \ldots ,f_{n})^{{\alpha}_i}
\end{align*}
such that  \( n \) represents the number of elements \( f_1, f_2, \ldots, f_n \) involved in the evaluation of each verbal element \( v_i \) and the exponents \( \alpha_i \) are integers. Thus we have 
\begin{align*}
&{\alpha}^*(x \langle T(F)\cap R \rangle )={\beta}(x) \langle T(\bar{F})\cap \bar{R} \rangle \\
&={\beta}(\prod_{i=1}^{n} {v_i}(f_1, \ldots ,f_{n})^{{\alpha}_i}) \langle T(\bar{F})\cap \bar{R} \rangle=\prod_{i=1}^{n} {v_i}({\beta}(f_1), \ldots ,{\beta}(f_{n}))^{{\alpha}_i} \langle T(\bar{F})\cap \bar{R} \rangle \\
&=\prod_{i=1}^{n} {v_i}({\beta}'(f_1), \ldots ,{\beta}'(f_{n}))^{{\alpha}_i} \langle T(\bar{F})\cap \bar{R} \rangle ={\beta}'(\prod_{i=1}^{n} {v_i}(f_1, \ldots ,f_{n})^{{\alpha}_i}) \langle T(\bar{F})\cap \bar{R} \rangle \\
&={\beta}'(x) \langle T(\bar{F})\cap \bar{R} \rangle ={({\alpha}')}^*(x \langle T(\bar{F})\cap \bar{R} \rangle ),
\end{align*}
where, $v_i \in V$ and ${\alpha}_i\in \Bbb{Z}$. Hence ${\alpha}^*=({\alpha}')^*$.
\\
$(iii)$ By using $(i)$ and $(ii)$, there exist homomorphisms $\bar{\beta}: \bar{F}\to \hat{F}$ and ${\gamma}^* : \mathcal{V}\tilde{\mathcal{B}}_0(\bar{G}) \rightarrow \mathcal{V}\tilde{\mathcal{B}}_0(\hat{G})$ given by $x\langle T(\bar{F})\cap \bar{R} \rangle \mapsto \bar{\beta}(x) \langle T(\hat{F})\cap \hat{R} \rangle$, where $x\in \bar{R}\cap V(\bar{F})$. By using ${\gamma}{\alpha}: G \to \hat{G}$, $(i)$ and $(ii)$, there exist two homomorphisms $\hat{\beta}=\bar{\beta}{\beta}: F\to \hat{F}$ and $({\gamma}{\alpha})^*:\mathcal{V}\tilde{\mathcal{B}}_0({G}) \rightarrow \mathcal{V}\tilde{\mathcal{B}}_0(\hat{G})$ with $x \langle T(F)\cap R \rangle \mapsto \hat{\beta}(x) \langle T(\hat{F})\cap \hat{R} \rangle $. Thus for all $x\in R\cap V(F)$, we have
\begin{align*}
&({\gamma}{\alpha})^*(x\langle T(F)\cap R\rangle)=\hat{\beta}(x)\langle T(\hat{F})\cap \hat{R}\rangle \\
&={\bar{\beta}}({\beta}(x))\langle T(\hat{F})\cap \hat{R}\rangle ={\gamma}^*(\beta(x)\langle T(\bar{F})\cap \bar{R}\rangle)={\gamma}^*({\alpha}^*(x\langle T(F) \cap R\rangle))\\
&={\gamma}^*{\alpha}^*(x\langle T(F)\cap R\rangle).
\end{align*}
 Therefore $({\gamma}{\alpha})^*={\gamma}^*{\alpha}^*$.
\\
$(iv)$ Let $G\cong {F}/{R}$ and $G\cong {\bar{F}}/{\bar{R}}$ be two presentations for $G$. Then we have
\[
\xymatrix{
1\longrightarrow R \ar[r] \ar[d] & {F } \ar[r]^{\pi} \ar[d]_{\beta} & {G}  \ar[d]^{1_G} \longrightarrow 1 \\
1\longrightarrow \bar{R} \ar[r] \ar[d] & \bar{F } \ar[r]^{\bar{\pi}} \ar[d]_{\delta} & {G}  \ar[d]^{1_G} \longrightarrow 1\\
1\longrightarrow {R} \ar[r] & {F}\ar[r]^{{\pi}}& {G} \longrightarrow 1.
&
}
\]
By using $(i)$, there exist homomorphisms $\beta$ and $\delta$, such that $\pi={\bar{\pi}}{\beta}$ and $\bar{\pi}={\pi}{\delta}$. Now using $(ii)$, there exist homomorphisms ${\alpha}^* : \frac{R\cap V(F)}{\langle T(F)\cap R \rangle }\rightarrow \frac{\bar{R}\cap V(\bar{F})}{\langle T(\bar{F})\cap \bar{R} \rangle }$ and ${\gamma}^* : \frac{\bar{R}\cap V(\bar{F})}{\langle T(\bar{F})\cap \bar{R}\rangle }\rightarrow \frac{R\cap V(F)}{\langle T(F)\cap R \rangle }$ which are independent of $\beta$ and $\delta$, respectively. Since the above diagram is commutative and ${\delta}{\beta}: F\rightarrow F$, $1_G: G\rightarrow G$ are two homomorphisms, by using $(ii)$, there exists the homomorphism $({\gamma}{\alpha})^*: \frac{R\cap V(F)}{\langle T(F)\cap R \rangle }\rightarrow \frac{R\cap V(F)}{\langle T(F)\cap R \rangle }$ given by $f \langle T(F)\cap R \rangle \mapsto {\delta}{\beta}(f) \langle T(F)\cap R \rangle $, which is independent of ${\delta}{\beta}$. Also, we can take ${\delta}{\beta}=1_F: F\rightarrow F$. So $({\gamma}{\alpha})^*=1_{\mathcal{V}\tilde{B_0}(G)}$. By using $(iii)$, $1=({\gamma}{\alpha})^*={\gamma}^*{\alpha}^*$. Similarly, we can see that ${{\alpha}^*}{{\gamma}^*}=1$. Therefore ${\alpha}^*$ and ${\gamma}^*$ are isomorphisms.
\end{proof}

As a special case, if $\mathcal{A}$ is the variety of abelian groups, then $A(F)=F'$ and $T(F)=\{[x,y] \ | \ x,y\in F\}$. So, in the finite case, the $\tilde{\mathcal{B}}_0$-invariant of $G$ is exactly the Bogomolov multiplier
$$\tilde{\mathcal{B}}_0(G)\cong \frac{R\cap F'}{\langle T(F)\cap R \rangle }.$$
If $\mathcal{N}_{c}$ is the variety of nilpotent groups of class at most $c\geq 1$, then $N_c(F)={\gamma}_{c+1}(F)$ and $T(F)=\{[x_1, \ldots ,x_{c+1}] \ | \ x_i\in F, 1\leq i\leq c+1\}$. In this case, this multiplier is called the $c$-nilpotent $\tilde{\mathcal{B}}_0$-multiplier of $G$, and defined as follows
$$\mathcal{N}_{c}\tilde{\mathcal{B}}_0(G)={\tilde{\mathcal{B}}_0}^{(c)}(G)=\frac{R\cap {\gamma}_{c+1}(F)}{ \langle T(F)\cap R \rangle}.$$
We need some elementary results about the $\tilde{\mathcal{B}}_0$-invariant. For the rest of this section, $\mathcal{V}$ is an outer commutator variety and $T(F)=\{v(f_1, \ldots ,f_s) \ | \ v\in V, f_i\in F, 1\leq i\leq s\}$.
\begin{proposition}\label{p3.3}
Let $G$ be a group and $N$ be a normal subgroup of $G$. Then the following sequence is exact
$$\mathcal{V}\tilde{\mathcal{B}}_0(G)\longrightarrow \mathcal{V}{\tilde{\mathcal{B}}_0}({G}/{N})\longrightarrow \frac{N}{\langle T(G)\cap N \rangle }\longrightarrow \frac{G}{V(G)}\longrightarrow \frac{G/N}{V(G/N)}\longrightarrow 1.$$
\end{proposition}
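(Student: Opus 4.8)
The plan is to reduce the whole statement to a computation with subgroups of a fixed free group, where each exactness assertion becomes a single application of the modular (Dedekind) law. First I would fix a free presentation $G\cong F/R$. Since $N\unlhd G$, there is a normal subgroup $S$ of $F$ with $R\leq S$ and $N=S/R$, and then $1\to S\to F\to G/N\to 1$ is a free presentation of $G/N$. Using the definitions, the full invariance of verbal subgroups, and the fact that $T(F)$ is closed under conjugation (as $v(f_1,\dots,f_s)^g=v(f_1^g,\dots,f_s^g)$), I would rewrite the five terms as
$$\mathcal{V}\tilde{B_0}(G)=\frac{R\cap V(F)}{<T(F)\cap R>},\qquad \mathcal{V}\tilde{B_0}(G/N)=\frac{S\cap V(F)}{<T(F)\cap S>},$$
$$\frac{N}{<T(G)\cap N>}\cong \frac{S}{<T(F)\cap S>\,R},\qquad \frac{G}{V(G)}\cong \frac{F}{V(F)R},\qquad \frac{G/N}{V(G/N)}\cong \frac{F}{V(F)S}.$$
The one non-formal identification is the middle one: since $R\leq S$, an element $v(g_1R,\dots,g_sR)$ of $T(G)$ lies in $N=S/R$ precisely when $v(g_1,\dots,g_s)\in S$, whence $<T(G)\cap N>=\big(<T(F)\cap S>\big)R/R$ and therefore $N/<T(G)\cap N>\cong S/\big(<T(F)\cap S>R\big)$.

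Next I would identify the four arrows, under these identifications, as the maps induced by the chain of inclusions $R\cap V(F)\subseteq S\cap V(F)\subseteq S\subseteq F$ followed by the relevant quotient projections. For the first arrow this uses Lemma~\ref{l3.2}(ii): $\mathrm{id}_F$ is a lift of the projection $G\to G/N$, so the induced map is $x<T(F)\cap R>\mapsto x<T(F)\cap S>$. Well-definedness of all four maps is then immediate from $T(F)\cap R\subseteq T(F)\cap S$, from $<T(F)\cap S>\subseteq V(F)$ (because $T(F)\subseteq V(F)$), from $V(F)R\subseteq V(F)S$, and from the normality of all the subgroups involved.

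Finally I would verify exactness spot by spot. The last map $F/V(F)R\to F/V(F)S$ is visibly onto. At $F/V(F)R$: the image of $S$ is $SV(F)R/V(F)R=V(F)S/V(F)R$, which is the kernel of the last map. At $S/\big(<T(F)\cap S>R\big)$: the kernel of the map to $F/V(F)R$ is $\big(S\cap V(F)R\big)/\big(<T(F)\cap S>R\big)$, and the modular law (using $R\leq S$) gives $S\cap V(F)R=(S\cap V(F))R$, which is exactly the image of $\mathcal{V}\tilde{B_0}(G/N)$. At $\mathcal{V}\tilde{B_0}(G/N)=(S\cap V(F))/<T(F)\cap S>$: the kernel of the map to $S/\big(<T(F)\cap S>R\big)$ is $\big((S\cap V(F))\cap<T(F)\cap S>R\big)/<T(F)\cap S>$, and since $<T(F)\cap S>\leq S\cap V(F)$ the modular law rewrites the numerator as $<T(F)\cap S>(R\cap V(F))$, which is the image of the first arrow. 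This finishes the four required checks.

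I expect the only genuine difficulty to be the bookkeeping in the second step — pinning down $N/<T(G)\cap N>\cong S/\big(<T(F)\cap S>R\big)$ and confirming that the abstractly prescribed maps really do coincide with the inclusion/projection maps inside $F$. Once that is settled, every exactness claim is one line. (The standing hypothesis that $\mathcal{V}$ is an outer commutator variety is not actually needed for this particular statement, but we keep it to stay within the conventions of the section.)
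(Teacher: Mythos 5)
Your proposal is correct and follows essentially the same route as the paper: fix one free presentation $F/R$ of $G$, set $K=\ker(F\to G/N)$ (your $S$), identify all five terms as subquotients of $F$, and check exactness along the chain of inclusion-induced maps. The only difference is that you make explicit the modular-law computations of the kernels, which the paper merely asserts.
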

\begin{proof}
Let $1 \xrightarrow{} R \xrightarrow{}  F \xrightarrow{\pi} G \xrightarrow{} 1$ be a free presentation of $G$ and let \\ $K=\ker(F\to G/N)$. We have $N\cong K/R$. The inclusion maps $R\cap {V(F)}\xrightarrow{f} K\cap {V(F)},\ \ $ $K\cap {V(F)}\xrightarrow{g} K,\ \ $ $K\xrightarrow{h} F$ and $F\xrightarrow{l} F$ induce the sequence of homomorphism 

\[
\xymatrix{
\frac{R\cap {V(F)}}{\langle T(F)\cap R \rangle} \xrightarrow{f^*} \frac{K\cap {V(F)}}{\langle T(F)\cap K \rangle} \xrightarrow{g^*} \frac{K}{\langle T(F)\cap K \rangle R} \xrightarrow{h^*} \frac{F}{R{V(F)}} \xrightarrow{l^*}{\frac{F}{K{V(F)}}} \rightarrow 1
}.
\]

We can see that
$$\displaystyle{\frac{K}{\langle T(F)\cap K \rangle R}\cong \frac{N}{\langle T(G)\cap N \rangle }} \ \ , \ \  \displaystyle{\frac{F}{R{V(F)}}\cong \frac{G}{V(G)}} \ \ , \ \ \displaystyle{\frac{F}{K{V(F)}}\cong \frac{{G}/{N}}{{V(G/N)}}}.$$
Now we have 
$$\displaystyle{\mathcal{V}{\tilde{\mathcal{B}}_0}(G)\cong \frac{R\cap{V(F)}}{\langle T(F)\cap R \rangle}}\ \ \ , \ \ \ \displaystyle{\mathcal{V}{\tilde{\mathcal{B}}_0}({G}/{N})\cong\frac{K\cap{V(F)}}{\langle T(F)\cap K \rangle}}.$$
Moreover,
$$\displaystyle{\text{im }{f^*}=\ker{g^*}=} \displaystyle{\frac{R\cap{V(F)}}{\langle T(F)\cap K \rangle }}\ \ \ \ , \ \ \ \ \displaystyle{\text{im }{g^*}=\ker{h^*}=\frac{K\cap{V(F)}}{\langle T(F)\cap K \rangle R}}$$
$$\displaystyle{\text{im }{h^*}=\ker{l^*}=\frac{KV(F)}{R{V(F)}}}$$
 and $l^*$ is an epimorphism. Therefore the above sequence is exact.
\end{proof}
\begin{proposition}\label{p3.4}
Let $G$ be a group with a free presentation $G\cong {F}/{R}$, and $N$  be a normal subgroup of $G$ such that $K=\ker(F\to {G}/{N})$. Then the sequence
$$1\to \dfrac{R\ \cap \langle T(F)\cap K \rangle}{\langle T(F)\cap R \rangle} \to \mathcal{V}{\tilde{\mathcal{B}}_0}(G) \to \mathcal{V}{\tilde{\mathcal{B}}_0}({G}/{N})\to \dfrac{N\cap V(G)}{\langle T(G)\cap N \rangle} \to 1.$$
 is exact.
\begin{proof}
Suppose $1 \xrightarrow{} R \xrightarrow{}  F \xrightarrow{\pi} G \xrightarrow{} 1$ is a free presentation of $G$ and let \\ $K=\ker(F\to G/N)$. We have $N\cong K/R$. The inclusion maps\\ $R\ \cap \langle T(F)\ \cap \ K \rangle \xrightarrow{f} R\ \cap {V(F)},\ $ $R\ \cap {V(F)}\xrightarrow{g} K\cap {V(F)}$ and the map \\ $K\cap {V(F)}\xrightarrow{h} (K\cap {V(F)})R$ induce the sequence of homomorphisms 
$$
\displaystyle{1\to \frac{R\  \cap \langle T(F)\cap K \rangle}{\langle T(F)\cap R\rangle }\xrightarrow{f^*}\frac{R\cap {V(F)}}{\langle T(F)\cap R \rangle} \xrightarrow{g^*} \frac{K\cap {V(F)}}{\langle T(F)\cap K \rangle} \xrightarrow{h^*}} \displaystyle{\frac{(K\cap {V(F)})R}{\langle T(F)\cap K \rangle R} \to 1}.$$

It is straightforward to verify that 
$$\displaystyle{\langle T(G)\cap N \rangle =\frac{\langle T(F)\cap K \rangle R}{R}} \ \ \ , \ \ \ \displaystyle{N\cap {V(G)}=\frac{K}{R}\cap \frac{{V(F)}R}{R}=\frac{(K\cap {V(F)})R}{R}}.$$
Therefore we have
$$\displaystyle{\frac{N\cap {V(G)}}{\langle T(G) \cap N \rangle }=\frac{({(K\cap V(F))R})/{R}}{({\langle T(F)\cap K \rangle R})/{R}}\cong \frac{(K\cap {V(F)})R}{\langle T(F) \cap K \rangle R}}.$$
Finally we have\\
$$\displaystyle{\mathcal{V}{\tilde{\mathcal{B}}_0}(G)\cong \frac{R\cap{V(F)}}{\langle T(F)\cap R \rangle }} \ \ \ , \ \ \ \displaystyle{\mathcal{V}{\tilde{\mathcal{B}}_0}({G}/{N})\cong\frac{K\cap{V(F)}}{\langle T(F)\cap K \rangle }}$$
 and 
$$\displaystyle{\text{im }{f^*}=\ker{g^*}=\frac{R\ \cap \langle T(F)\cap K \rangle }{\langle T(F)\cap R \rangle}}\ \ \ , \ \ \ \displaystyle{\text{im }{g^*}=\ker{h^*}=\frac{R\cap{V(F)}}{\langle T(F)\cap K \rangle }}.$$
Moreover $h^*$ is an epimorphism. Therefore the above sequence is exact.
\end{proof}
\end{proposition}
\begin{lemma}\label{l3.5}
Let $G$ be a group with a normal subgroup $N$. Then there exists a group $H$ with a normal subgroup $B$ such that 
\renewcommand {\labelenumi}{(\roman{enumi})} 
\begin{enumerate}
\item{$V(G)\cap N\cong {H}/{B}$,}
\item{$B\cong \mathcal{V}\tilde{\mathcal{B}}_0(G)$,}
\item{$\mathcal{V}\tilde{\mathcal{B}}_0({G}/{N})$ is an epimorphic image of $H$.}
\end{enumerate}
\end{lemma}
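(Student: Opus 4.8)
The plan is to read off the desired group $H$ and its subgroup $B$ from a free presentation of $G$, in exactly the way one builds the auxiliary groups in the classical five-term sequences for the Schur multiplier and the Baer invariant. Fix a free presentation $1\xrightarrow{}R\xrightarrow{}F\xrightarrow{\pi}G\xrightarrow{}1$ and put $K=\ker(F\xrightarrow{}G/N)$, so that $R\le K$, $N\cong K/R$, and $1\xrightarrow{}K\xrightarrow{}F\xrightarrow{}G/N\xrightarrow{}1$ is a free presentation of $G/N$. Because $T(F)$ is closed under conjugation in $F$ (each of its members is a value of an o.c.\ word) and $R,K\unlhd F$, the subgroups $<T(F)\cap R>$ and $<T(F)\cap K>$ are normal in $F$, and one has $<T(F)\cap R>\le<T(F)\cap K>\le K\cap V(F)$ as well as $<T(F)\cap R>\le R\cap V(F)\le K\cap V(F)$. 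I would then set
$$H=\frac{K\cap V(F)}{<T(F)\cap R>}\qquad\text{and}\qquad B=\frac{R\cap V(F)}{<T(F)\cap R>}.$$
By Definition \ref{d3.1} (applied to the presentation $F/R$), $B$ is precisely $\mathcal{V}\tilde{B_0}(G)$, so (ii) holds as soon as we know $B\unlhd H$.

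To establish $B\unlhd H$, I would invoke Proposition \ref{p2.3} with $\mathcal{W}=\mathcal{V}$; this is legitimate since $[V(G),G]\le V(G)$ gives $\mathcal{V}\subseteq[\mathcal{V},\mathcal{F}]$, as already observed after Definition \ref{d3.1}. The proposition yields $[R,V(F)]\subseteq[RV^{*}F]\subseteq<T(F)\cap R>$, whence $[\,K\cap V(F),\,R\cap V(F)\,]\subseteq[V(F),R]\subseteq<T(F)\cap R>$. Passing to $H$, this says that $B$ commutes with every element of $H$, i.e.\ $B$ is central in $H$ (in particular normal, although $H$ itself need not be abelian).

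For (i), the chain $<T(F)\cap R>\le R\cap V(F)\le K\cap V(F)$ of subgroups normal in $F$ and the third isomorphism theorem give $H/B\cong\frac{K\cap V(F)}{R\cap V(F)}$, and it remains to identify the latter with $V(G)\cap N$. Using $R\le K$ together with Dedekind's modular law $V(F)R\cap K=(V(F)\cap K)R$, and $V(G)=V(F)R/R$, one computes
\begin{align*}
V(G)\cap N&=\frac{V(F)R}{R}\cap\frac{K}{R}=\frac{(V(F)\cap K)R}{R}\\
&\cong\frac{K\cap V(F)}{(K\cap V(F))\cap R}=\frac{K\cap V(F)}{R\cap V(F)}\cong H/B,
\end{align*}
which is (i). For (iii), the inclusion $<T(F)\cap R>\le<T(F)\cap K>$ of subgroups normal in $F$ shows that the identity map of $K\cap V(F)$ induces an epimorphism
$$H=\frac{K\cap V(F)}{<T(F)\cap R>}\longrightarrow\frac{K\cap V(F)}{<T(F)\cap K>}=\mathcal{V}\tilde{B_0}(G/N),$$
the equality on the right being Definition \ref{d3.1} for the presentation $F/K$ of $G/N$ (independent of the presentation by Lemma \ref{l3.2}). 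Thus $\mathcal{V}\tilde{B_0}(G/N)$ is an epimorphic image of $H$, proving (iii).

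Almost all of this is bookkeeping with the isomorphism theorems and with the normal closures $<T(F)\cap R>$ and $<T(F)\cap K>$ inside $F$. The one step that carries genuine content is the commutator containment $[V(F),R]\subseteq<T(F)\cap R>$ used to force $B$ into the centre of $H$; that is exactly the place where Proposition \ref{p2.3} — which itself rests on Hall's Proposition \ref{p2.1} and the Hulse--Lennox identity of Lemma \ref{l2.2} — is doing the real work.
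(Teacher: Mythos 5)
Your proof is correct and follows essentially the same route as the paper: the same choices $H=\frac{K\cap V(F)}{\langle T(F)\cap R\rangle}$ and $B=\frac{R\cap V(F)}{\langle T(F)\cap R\rangle}$, the same Dedekind-law computation for (i), and the same natural surjection for (iii). The only addition is your explicit verification via Proposition \ref{p2.3} that $B$ is central (hence normal) in $H$, a point the paper leaves implicit.
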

\begin{proof}
Let $G\cong F/R$ be a free presentation of $G$ and suppose $N\cong K/R$. So $G/N \cong F/K$. Let 
$$H=\dfrac{V(F)\cap K}{\langle T(F)\cap R \rangle }\ \ \ \ , \ \ \ \ B=\dfrac{V(F)\cap R}{\langle T(F)\cap R \rangle }.$$
We have
\begin{align*}
V(G)\cap N &\cong \dfrac{V(F)R}{R}\cap \dfrac{K}{R}=\dfrac{V(F)R\cap K}{R}=\dfrac{(V(F)\cap K)R}{R}\\
&\cong \dfrac{V(F)\cap K}{V(F)\cap R}\cong \dfrac{(V(F)\cap K)/\langle T(F)\cap R \rangle }{(V(F)\cap R)/\langle T(F)\cap R \rangle}\\
&=\dfrac{H}{B}.
\end{align*}
Also, 
$$\mathcal{V}{\tilde{\mathcal{B}}_0}({G}/{N})\cong \dfrac{V(F)\cap K}{\langle T(F)\cap K \rangle }\cong \dfrac{(V(F)\cap K)/\langle T(F)\cap R \rangle }{\langle T(F) \cap K \rangle / \langle T(F)\cap R \rangle }.$$
Hence, $(i)$, $(ii)$ and $(iii)$ are obtained.
\end{proof}

Now we obtain an explicit formula for the $\tilde{\mathcal{B}}_0$-invariant of a direct product of two groups. 
\\
It is easy to see that if $G_1$ and $G_2$ are groups with free presentations ${F_1}/{R_1}$ and ${F_2}/{R_2}$ and $F = F_1\ast F_2$ is also a free product of $F_1$ and $F_2$, then $1\to R \to F \to G_1 \times G_2 \to 1$ is a free presentation for $G_1 \times G_2$, where $R={R_1} {R_2} [F_2,F_1]$.
 
\begin{remark}\label{r777}
By using Definition \ref{d3.1}, we have
$$\mathcal{V}{\tilde{\mathcal{B}}_0}(G_1\times G_2) = \dfrac{({R_1}{R_2}[F_2,F_1])\cap V(F)}{\langle T(F)\cap ({R_1}{R_2}[F_2,F_1]) \rangle },$$
where $V$ is the set of o.c words of the variety $\mathcal{V}$. The epimorphism $F\to F_1\times F_2$ maps $\langle T(F)\cap R \rangle$ onto ${\langle T(F_1)\cap R_1 \rangle} \oplus {\langle T(F_2)\cap R_2 \rangle}$, So there exists the following epimorphism
$$\alpha : \dfrac{R\cap V(F)}{\langle T(F)\cap R \rangle }\longrightarrow  \dfrac{R_1\cap V(F_1)}{\langle T(F_1)\cap R_1 \rangle} \oplus \dfrac{R_2\cap V({F_2})}{\langle T(F_2)\cap R_2 \rangle}$$
$$x\langle T(F)\cap R \rangle \longmapsto ({x_1} \langle T(F_1)\cap R_1\rangle \ ,\ {x_2} \langle T(F_2)\cap R_2 \rangle ),$$
where $x={x_1} {x_2} {x_3}$, $x_1\in R_1\cap V({F_1})$, $x_2\in R_2\cap V({F_2})$ and $x_3\in [F_2,F_1]$. 
\end{remark}
Before presenting Proposition 3.7, it is insightful to draw parallels with Moghaddam's seminal work on the Baer-invariant of a direct product \cite{mn}. Our proof and result share striking similarities with Lemma 2.2 therein, shedding light on the analogies between our findings and established theory.
Let $K=\ker{\alpha}$, we have the following proposition.
\begin{proposition}\label{p4.8}
Let $\mathcal{V}$ be a variety of groups defined by the set of o.c words $V$ and let $G_1$ and $G_2$ be groups. Then
$$\mathcal{V}{\tilde{\mathcal{B}}_0}(G_1\times G_2) \cong \mathcal{V}{\tilde{\mathcal{B}}_0}(G_1) \oplus \mathcal{V}{\tilde{\mathcal{B}}_0}(G_2) \oplus K.$$

\begin{proof}
Define a map
$$\beta : \dfrac{R_1\cap V({F_1})}{\langle T(F_1)\cap R_1\rangle} \oplus \dfrac{R_2\cap V({F_2})}{\langle T(F_2)\cap R_2 \rangle} \longrightarrow   \dfrac{R\cap V(F)}{\langle T(F)\cap R \rangle}$$
given by
$$ ({x_1} \langle T(F_1)\cap R_1 \rangle \ ,\ {x_2} \langle T(F_2)\cap R_2 \rangle) \longmapsto x \langle T(F)\cap R \rangle $$
is a well-defined homomorphism. It is easy to check that $\beta$ is a right inverse to $\alpha$. Thus we have a split extension and the result holds.

\end{proof}
\end{proposition}

\section{Verbal preserving extension}

In this section, we introduce a broad class of commutativity-preserving group extensions.

As $\tilde{\mathcal{B}}_0(G)$ plays a role akin to describing commutativity-preserving extensions (CP extensions), $\mathcal{V}\tilde{\mathcal{B}}_0(G)$ also serves a similar function. It is termed verbal preserving extensions (VP extensions) of groups.

\begin{definition}\label{d4.1}
Let $Q$ be a group and $N$ be a $Q$-module. The extension $1 \xrightarrow{} N \xrightarrow{\chi}  G \xrightarrow{\pi} Q \xrightarrow{} 1$ of $N$ by $Q$ is a VP extension if trivial words in elements of $Q$ have trivial lifts in $G$.
\end{definition}

\begin{definition}(\cite{14'}, Definition 3.18)\label{d4.2}
The extensions $1 \xrightarrow{} N \xrightarrow{\mu_1}  G_1 \xrightarrow{\pi_1} H \xrightarrow{} 1$ and $1 \xrightarrow{} N \xrightarrow{\mu_2}  G_2 \xrightarrow{{\pi}_2} H \xrightarrow{} 1$ are equivalent if there exists a group isomorphism $T:G_1\rightarrow G_2$ such that the following diagram is commutative

\[
\xymatrix{
1\longrightarrow N \ar[r]^{\mu_1} {\ar[d]_{1_N}} & {G_1} \ar[r]^{\pi_1} \ar[d]_{T} & {Q}  \ar[d]^{1_Q} \longrightarrow 1 \\
1\longrightarrow {N} \ar[r]^{{\mu}_2}  & {G_2} \ar[r]^{{\pi}_2}  & {Q}   \longrightarrow 1.
&
}
\]

\end{definition}

\begin{lemma}\label{l4.3}
The class of VP extensions is closed under equivalence of extensions.
\end{lemma}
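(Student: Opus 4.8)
The plan is to argue by a direct diagram chase, using the equivalence isomorphism to transport lifts back and forth. Concretely, suppose $1 \xrightarrow{} N \xrightarrow{\mu} G \xrightarrow{\pi} Q \xrightarrow{} 1$ is a VP extension which is equivalent to $1 \xrightarrow{} N \xrightarrow{\mu'} G' \xrightarrow{\pi'} Q \xrightarrow{} 1$, so that there is a group isomorphism $T : G \to G'$ with $T\mu = \mu'$ and $\pi' T = \pi$. To prove the second extension is again VP, I would fix a word $v \in V$ and elements $q_1, \dots, q_s \in Q$ with $v(q_1, \dots, q_s) = 1$, and take arbitrary lifts $g'_1, \dots, g'_s \in G'$, i.e. $\pi'(g'_i) = q_i$.

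The first step is to pull these lifts back to $G$ via $g_i := T^{-1}(g'_i)$. Since $\pi = \pi' T$, we get $\pi(g_i) = \pi'(g'_i) = q_i$, so each $g_i$ is a lift of $q_i$ in $G$; as the first extension is VP and $v(q_1,\dots,q_s) = 1$, this forces $v(g_1,\dots,g_s) = 1$ in $G$. The second step is to push the equality forward through $T$: a homomorphism commutes with word evaluation, hence $v(g'_1,\dots,g'_s) = v(T(g_1),\dots,T(g_s)) = T\bigl(v(g_1,\dots,g_s)\bigr) = T(1) = 1$ in $G'$. As the $g'_i$ were arbitrary lifts, this shows the second extension is VP. Running the same argument with $T^{-1}$ in place of $T$ gives the converse implication, so ``being a VP extension'' genuinely depends only on the equivalence class.

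I do not expect a real obstacle here: the content is entirely the functoriality of word maps under homomorphisms together with the definition of equivalence. The only point requiring a little care is reconciling the proof with the exact quantifier in Definition \ref{d4.1} --- whether a \emph{trivial lift} means that \emph{every} lift of a trivial word value is trivial, or merely that \emph{some} lift is. I would phrase the argument so that it covers both readings: in the ``for all'' reading one transports an arbitrary family of lifts as above; in the ``there exists'' reading one instead lets the VP property of the first extension produce a trivial lift $(g_1,\dots,g_s)$ in $G$ and then transports it by $T$ to the trivial lift $(T(g_1),\dots,T(g_s))$ in $G'$. In either case the identity $T\bigl(v(g_1,\dots,g_s)\bigr) = v(T(g_1),\dots,T(g_s))$ is the crux.
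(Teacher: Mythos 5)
Your proposal is correct and, in its ``there exists'' branch, is essentially identical to the paper's own proof: the paper reads Definition \ref{d4.1} as asserting the existence of a trivial lift, produces such a lift $(g_1,\dots,g_s)$ in $G$, and transports it to $(T(g_1),\dots,T(g_s))$ in $G'$ using exactly the identity $T(v(g_1,\dots,g_s))=v(T(g_1),\dots,T(g_s))$. Your additional handling of the ``for all lifts'' reading is a sensible extra precaution but is not needed to match the paper.
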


\begin{proof}
Let
\[
\xymatrix{
1\longrightarrow N \ar[r]^{\mu_1} \ar[d]_{1_N} & {G_1} \ar[r]^{\pi_1} \ar[d]_{T} & {Q}  \ar[d]^{1_Q} \longrightarrow 1 \\
1\longrightarrow N \ar[r]^{{\mu}_2}  & {G_2} \ar[r]^{{\pi}_2}  & {Q}  \longrightarrow 1&
}
\]
be equivalent extensions. Suppose that $G_1$ is a VP extension of $N$ by $Q$. Choose $q_1, \ldots ,q_s\in Q$ and $v\in V$, where $v(q_1, \ldots ,q_s)=1$. Then there exist $g_1, \ldots ,g_s\in G_1$ such that $v(g_1, \ldots ,g_s)=1$ and ${{\pi}_1}(g_i)=q_i$, where $i=1, \ldots ,s$. Take ${g'_i}=T(g_i)$. Then $v(g'_1, \ldots ,g'_s)=1$ and ${{\pi}_2}(g'_i)=q_i$. Thus $G_2$ is a VP extension of $N$ by $Q$.
\end{proof}

Now, we introduce a special version of a VP extension with a marginal kernel.

\begin{definition}\label{d4.4}
The VP extension $1 \xrightarrow{} N \xrightarrow{\chi}  G \xrightarrow{\pi} Q \xrightarrow{} 1$ of $N$ by $Q$ is marginal if $\chi(N)\subseteq V^*(G)$.
\end{definition}

\begin{proposition}\label{p4.5}
Let $1 \xrightarrow{} N \xrightarrow{\chi}  G \xrightarrow{\pi} Q \xrightarrow{} 1$ be a marginal extension. This sequence is a VP extension if and only if $\chi(N)\cap T(G)=1$, where $T(G)=\{v(g_1, \ldots ,g_s) \ | \ v\in V , g_i\in G , i=1, \ldots ,s\}$.
\end{proposition}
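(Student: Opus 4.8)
The plan is to unravel both directions directly from the definitions, using the marginality hypothesis $\chi(N)\subseteq V^*(G)$ to control lifts of trivial words. Throughout I identify $N$ with its image $\chi(N)$, so that $N\subseteq V^*(G)$ and $Q\cong G/N$, and I write $\bar g=\pi(g)$.

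First I would prove the forward implication. Assume the sequence is a VP extension; I must show $N\cap T(G)=1$. Take an element of $N\cap T(G)$, say $n=v(g_1,\dots,g_s)\in N$ with $v\in V$ and $g_i\in G$. Applying $\pi$ gives $v(\bar g_1,\dots,\bar g_s)=\pi(n)=1$ in $Q$, so $v(\bar g_1,\dots,\bar g_s)$ is a trivial word of elements of $Q$. By the VP property this trivial word has a trivial lift: there exist $h_1,\dots,h_s\in G$ with $\pi(h_i)=\bar g_i$ and $v(h_1,\dots,h_s)=1$. Now $h_i$ and $g_i$ differ by an element of $N\subseteq V^*(G)$, i.e. $g_i=h_i\,n_i$ with $n_i\in V^*(G)$. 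Since $V^*(G)$ is the marginal subgroup, replacing any argument of the word $v$ by a product with a marginal element does not change the value of $v$ (this is precisely the defining property of $V^*(G)$ applied one variable at a time); hence $v(g_1,\dots,g_s)=v(h_1 n_1,\dots,h_s n_s)=v(h_1,\dots,h_s)=1$. Therefore $n=1$, proving $N\cap T(G)=1$.

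Next I would prove the converse. Assume $N\cap T(G)=1$; I must show the extension is VP, i.e. every trivial word of elements of $Q$ lifts to a trivial word in $G$. Let $v\in V$ and $q_1,\dots,q_s\in Q$ with $v(q_1,\dots,q_s)=1$. Choose any lifts $g_i\in G$ with $\pi(g_i)=q_i$. Then $\pi\big(v(g_1,\dots,g_s)\big)=v(q_1,\dots,q_s)=1$, so $v(g_1,\dots,g_s)\in N$. But $v(g_1,\dots,g_s)\in T(G)$ by definition of $T(G)$, so $v(g_1,\dots,g_s)\in N\cap T(G)=1$. Thus $v(g_1,\dots,g_s)=1$ already, and the lifts $g_i$ witness the trivial lift. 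Hence the extension is VP.

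The only genuinely delicate point is the use of the marginal property in the forward direction: one must know that if $n\in V^*(G)$ then $v(\dots,g_i n,\dots)=v(\dots,g_i,\dots)$ for each $i$ and each $v\in V$. This is the standard characterization of the marginal subgroup $V^*(G)$ associated with the variety $\mathcal V$ (as introduced by P.~Hall, cited earlier), so it may be invoked without proof; applying it successively in each of the $s$ coordinates gives the equality $v(h_1 n_1,\dots,h_s n_s)=v(h_1,\dots,h_s)$ used above. Everything else is a routine chase through $\pi$. Note also that the hypothesis that the extension is \emph{marginal} (not merely VP) is used only in the forward direction; the converse holds for an arbitrary extension with the property $N\cap T(G)=1$, which is consistent with Definition~\ref{d4.4} singling out the marginal case as the one of interest.
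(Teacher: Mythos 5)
Your proof is correct and follows essentially the same route as the paper's: the direction $\chi(N)\cap T(G)=1\Rightarrow$ VP is the same lift-and-land-in-the-intersection argument, and the other direction uses the marginality of $\chi(N)$ to replace the $g_i$ by the trivial lifts $g_i'$ exactly as in the paper. Your added remark that marginality is only needed in one direction is accurate and a nice observation, but the substance of the argument is identical.
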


\begin{proof}
Let $\chi(N)\cap T(G)=1$. Choose $q_1, \ldots ,q_s\in Q$ and $v\in V$ with $v(q_1, \ldots ,q_s)=1$. For some $g_i\in G$, we have $q_i=\pi(g_i)$. Then $\pi(v(g_1, \ldots ,g_s))=1$, hence $v(g_1, \ldots ,g_s)\in \chi(N)\cap T(G)$. Thus $v(g_1, \ldots ,g_s)=1$. Conversely, suppose that the above sequence is a VP marginal extension. Choose $v(g_1, \ldots ,g_s)\in \chi(N)\cap T(G)$. We know $\chi(N)=\ker {\pi}$, so $\pi(v(g_1, \ldots , g_s))=1$ and $v(\pi(g_1), \ldots ,\pi(g_s))=1$. By assumption, there exist $g'_1, \ldots ,g'_s\in G$ such that $\pi(g_i)=\pi(g'_i)$ and $v(g'_1, \ldots ,g'_s)=1$. Since $\pi(g_i)=\pi(g'_i)$, ${g_i}{g'_i}^{-1}\in \ker {\pi}=\chi(N)$. So $g_i=g'_i{a_i}$, where $a_i\in \chi(N)$. $1 \xrightarrow{} N \xrightarrow{\chi}  G \xrightarrow{\pi} Q \xrightarrow{} 1$ be a marginal extension. Thus we have $v(g_1, \ldots ,g_s)=v(g'_1a_1, \ldots ,g'_sa_s)=v(g'_1, \ldots ,g'_s)=1$, as required.
\end{proof}

\begin{definition}\label{d4.6}
The normal subgroup $N$ of a group $G$ is a VP subgroup of $G$ if the extension $1 \rightarrow{} N \rightarrow{}  G \rightarrow{} G/N \xrightarrow{} 1$ is a VP extension.
\end{definition}

Note that, when $N$ is a marginal subgroup of $G$, Proposition \ref{p4.5} shows that $N$ is a VP subgroup if and only if $N\cap T(G)=1$

.

Let $1 \rightarrow{} R \rightarrow{} F \rightarrow{} G \rightarrow{} 1$ be a free presentation of $G$. Then the extension 
\[
1 \rightarrow{}\frac{R \cap V(F)}{\langle T(F)\cap R\rangle}\rightarrow{}\frac{R}{\langle T(F)\cap R\rangle}\rightarrow{}\frac{R}{R \cap V(F)}\rightarrow{} 1
\]
is a VP extension. So, the normal subgroup $\frac{R \cap V(F)}{\langle T(F)\cap R\rangle}$ of $\frac{R}{\langle T(F)\cap R\rangle}$ is a VP subgroup. Also for all $r_i \in R \ (i=1,\ldots ,s)$, $a\in R\cap V(F)$ and for all $v\in V$, we have
\[
v(r_1,\ldots , r_i{a},\ldots ,r_s)v(r_1,\ldots ,r_i,\ldots ,r_s)^{-1} \in \langle T(F)\cap R\rangle.
\]
Thus, $\frac{R \cap V(F)}{\langle T(F)\cap R\rangle}$ is a marginal subgroup of $\frac{R}{\langle T(F)\cap R\rangle}$.

\begin{lemma}\label{l4.7}
Let $N$ be a marginal VP subgroup of $G$. Then the sequence
\[
1 \xrightarrow{} \mathcal{V}\tilde{\mathcal{B}}_0(G) \xrightarrow{}  \mathcal{V}\tilde{\mathcal{B}}_0(G/N) \xrightarrow{} N\cap V(G) \xrightarrow{} 1
\]
is exact.
\end{lemma}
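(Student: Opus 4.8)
The plan is to obtain the statement as a specialization of Proposition~\ref{p3.4}. Fix a free presentation $1\to R\to F\to G\to 1$ with quotient map $\pi\colon F\to G$, and put $K=\ker(F\to G/N)$, so that $N\cong K/R$. Proposition~\ref{p3.4} then supplies the exact sequence
$$1\to \frac{R\ \cap <T(F)\cap K>}{<T(F)\cap R>}\to \mathcal{V}\tilde{B_0}(G)\to \mathcal{V}\tilde{B_0}(G/N)\to \frac{N\cap V(G)}{<T(G)\cap N>}\to 1,$$
so it suffices to prove that, under the hypothesis that $N$ is a marginal VP subgroup, the leftmost quotient is trivial and the rightmost quotient equals $N\cap V(G)$.

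For the rightmost term I would use the remark following Definition~\ref{d4.6}: since $N$ is marginal in $G$, Proposition~\ref{p4.5} applied to the extension $1\to N\to G\to G/N\to 1$ shows that $N$ is a VP subgroup precisely when $N\cap T(G)=1$. Hence $<T(G)\cap N>=1$, and the last term is exactly $N\cap V(G)$.

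For the leftmost term I would invoke the identification established inside the proof of Proposition~\ref{p3.4}, namely $<T(G)\cap N>\cong <T(F)\cap K>R/R$. As the left-hand side is now trivial, $<T(F)\cap K>\subseteq R$. Consequently every element of the generating set $T(F)\cap K$ already lies in $R$, so $T(F)\cap K\subseteq T(F)\cap R$, while the opposite inclusion is immediate from $R\subseteq K$. Thus $<T(F)\cap K>=<T(F)\cap R>\subseteq R$, and the leftmost term collapses to $<T(F)\cap R>/<T(F)\cap R>=1$.

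Putting these together, the five-term sequence reduces to $1\to \mathcal{V}\tilde{B_0}(G)\to \mathcal{V}\tilde{B_0}(G/N)\to N\cap V(G)\to 1$, which is what is claimed, with the maps being those induced in Proposition~\ref{p3.4}. I do not expect a genuine obstacle here: the real content sits in Propositions~\ref{p3.4} and~\ref{p4.5}, and the present lemma is essentially their combination. The one point needing a little care is the implication $<T(F)\cap K>\subseteq R\Rightarrow T(F)\cap K=T(F)\cap R$, but this is harmless since $T(F)\cap K$ is by definition a subset of $<T(F)\cap K>$.
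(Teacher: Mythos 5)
Your proposal is correct and follows essentially the same route as the paper: the crux in both is that the marginal VP hypothesis (via Proposition~\ref{p4.5}, i.e.\ $N\cap T(G)=1$) forces $<T(F)\cap K>\leq R$ and hence $<T(F)\cap K>=<T(F)\cap R>$, after which the short exact sequence drops out of previously established identifications. The only cosmetic difference is that the paper concludes by feeding this equality into Lemma~\ref{l3.5} (so that $H\cong \mathcal{V}\tilde{B_0}(G/N)$, $B\cong\mathcal{V}\tilde{B_0}(G)$, $H/B\cong N\cap V(G)$), whereas you specialize the four-term sequence of Proposition~\ref{p3.4}; both carry the same content.
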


\begin{proof}
Let $G\cong F/R$ be a free presentation of $G$ and suppose $K=\ker (F\rightarrow G/N)$. We have $N\cong K/R$, so $G/N\cong F/K$. Since $N$ is a marginal VP subgroup of $G$, then $\langle T(F)\cap K \rangle \leq R$. Thus, $\langle T(F)\cap K \rangle= \langle T(F)\cap R \rangle $. Now the result is obtained by using Lemma \ref{l3.5}.
\end{proof}

\section*{Declarations}

\begin{itemize} 
\item Funding\\
The authors did not receive support from any organization for the submitted work.
\item Conflict of interest\\
The authors have no competing interests to declare that are relevant to the content of this article.
\item Ethics approval \\
Not applicable.
\item Consent to participate\\
Informed consent was obtained from all individual participants included in the study.
\item Consent for publication\\
Not applicable.
\item Availability of data and materials\\
Not applicable.
\item Code availability \\
Not applicable.
\item Authors' contributions\\
All authors contributed almost equally.
\end{itemize}

\end{document}